\newtheorem{theorem}{Theorem}[section]
\newtheorem{lemma}[theorem]{Lemma}
\theoremstyle{definition}
\theoremstyle{definition}
\theoremstyle{remark}
\newtheorem{remark}[theorem]{Remark}
\DeclarePairedDelimiter\floor{\lfloor}{\rfloor}
\newcommand\E{\operatorname{\mathbb E}{}}
\newcommand\Xn{{X_1(n)}}
\newcommand\Yn{{X_2(n)}}
\newcommand\Catalan{c}
\newcommand\Cn{C_n}
\newcommand\Var{\operatorname{Var}}
\renewcommand\P{\operatorname{\mathbb P{}}}
\newcommand\dto{\overset{\mathrm{d}}{\longrightarrow}}
\newcommand\pto{\overset{\mathrm{p}}{\longrightarrow}}
\newcommand\ntoo{\ensuremath{{n\to\infty}}}
\newcommand\nii{\floor{n^{2/3}}}
\newcommand\cX{{\mathcal X}}
\newcommand\cE{{\mathcal E}}
\renewcommand\le{\leqslant}
\renewcommand\ge{\geqslant}
\renewcommand\leq{\leqslant}
\title{How many coin tosses would you need until you get \\ $n$ Heads or $m$ Tails?}
\author[Janson]{Svante Janson}
\address[S.~Janson]{Department of Mathematics, Uppsala University, PO Box 480, SE-751~06 Uppsala, Sweden}
\email{\textcolor{blue}{\href{mailto:svante.janson@math.uu.se}{svante.janson@math.uu.se}}}
\author[Martinez]{Lucy Martinez}
\address[L.~Martinez \& D.~Zeilberger]{Department of Mathematics, Rutgers University, Piscataway, NJ 08854}
\email{\textcolor{blue}{\href{mailto:lucy.martinez@rutgers.edu}{lucy.martinez@rutgers.edu}}}
\author[Zeilberger]{Doron Zeilberger}
\email{\textcolor{blue}{\href{mailto:doronzeil@gmail.com}{doronzeil@gmail.com}}}
\begin{document}

\begin{abstract}
We harness both human ingenuity and the power of symbolic computation to study the number of coin tosses until reaching $n$ Heads or $m$ Tails. We also talk about the closely related problem of reaching $n$ Heads and $m$ Tails. This paper is accompanied by a Maple package that enables fast computation of expectations, variances, and higher moments of these quantities.
\end{abstract}

\maketitle

\section{Preface}\label{Spreface}

If you toss a coin whose probability of Heads is $p$, until you reach $n$ Heads, you should expect to make $n/p$ coin tosses, and the variance and higher moments are easily derived from the explicit probability generating function, (as usual $q\coloneqq1-p$)
\begin{align}\label{negbin}
\sum_{k=0}^{\infty} \binom{n+k-1}{n-1}(px)^{n} (qx)^{k} \, = \, \left
  (\frac{px}{1-qx} \right )^n,
\end{align}
which is essentially the \emph{negative-binomial distribution}~\cite{W} (note that usually one only counts the number of Tails until you reach $n$ Heads, but we are interested in the total number of coin-tosses, so we add the $n$ Heads). From this probability generating function we can extract not only the expectation, $n/p$, but also the variance $\frac{n(1-p)}{p^2}$, and by repeated differentiation with respect to $x$,
and plugging in $x=1$, we can easily derive explicit expressions of as many as desired \emph{factorial moments}, that in turn, yield the \emph{moments}, and from them the \emph{central moments}. Then we can compute the \emph{scaled central moments}, take the limit as $n \rightarrow \infty$ and prove that for a {\bf fixed} $p$ it tends to the good old Normal Distribution. Of course, in this simple case we can also derive a \emph{local limit law}.
(For a probabilist, these are examples of the classical central limit theorem and local limit theorem, see e.g.~\cite[Theorems 7.1.1 and 7.7.6]{Gut}.)

But what if you are not a \emph{Headist}? What if you like Tails just as much, and stop as soon as you get
$n$ Heads OR $m$ Tails? Another interesting stopping rule is to make {\bf both} Heads and Tails happy and keep tossing until you get $n$ Heads AND $m$ Tails. Now things are not as nice and simple. Nevertheless, using Wilf-Zeilberger algorithmic proof theory~\cite{PWZ}, we can derive the next-best thing, linear recurrences that enable very fast computation of these quantities.
These will be presented in Section~\ref{sec:recurrences}.

In the special case of a \emph{fair} coin, and the same desired number of Heads and Tails (let's call it $n$), we get, {\bf explicit} expressions not only for the expectation and variance, but for as many as-desired moments (we went up to $200^{th}$, but could go much further). Then we (or rather Maple) can compute the limits (as $n\to \infty$) of the scaled moments, and {\bf surprise!} they coincide {\bf exactly} with the central-scaled moments of $-|N(0,1)|$, the continuous probability distribution whose \emph{probability density function} (pdf) is
\[
\frac{e^{-x^2/2}}{\sqrt{\pi/2}},
\]
supported in $-\infty < x < 0$. This will be accomplished in Section~\ref{sec:Explicit-exprs}.

While we (or rather our computer) can prove this convergence for the first $200$ moments, and with a larger computer, the first $2000$, we can {\bf not} prove it for \emph{all} moments.
In Section~\ref{Shuman}, we will prove it completely, using purely human-generated, paper-and-pencil mathematics.

The reader must have noticed that our problem brings to mind the {\bf first-ever} probability problem- the one that gave rise to the theory of probability, namely the ``Problem of the Points''\cite{W2}. But Fermat and Pascal were only interested in the probability of getting $n$ Heads vs.\ getting $m$ Tails, {\bf not} about the duration. A literature search for ``Problem of the points'' and ``duration'' lead to only one hit~\cite{RSW} (yes, the same Ross who wrote the famous textbook), but it has measure zero intersection with the present paper. In the setting $m=n$, corresponding to the case of $n$ Heads OR $n$ Tails, related results have been obtained by Volkov and Wiktorsson~\cite{VW}. Their work concerns an expectation similar to, but distinct from, the one studied here. Additional discussion appears at the end of Section~\ref{sec:recurrences}.

\subsection{The Maple package} The file {\tt CoinToss.txt} is freely available from

{\tt https://sites.math.rutgers.edu/\~{}zeilberg/tokhniot/CoinToss.txt}, 

\noindent and allows you to experiment with these quantities. We include some plots in Figure~\ref{fig: expected tosses} showing the expected number of tosses as a function of the probability of getting Heads under various goals. We also include two plots in Figure~\ref{fig: pdf} of the probability mass function for the (scaled) discrete random variable representing the number of coin tosses until a loaded coin reaches either 200 Heads OR 200 Tails, where with probability $p$ you get Heads.
\begin{figure}[H]
    \centering
   \begin{subfigure}[b]{0.32\textwidth}
      \centering
     \includegraphics[height=2in]{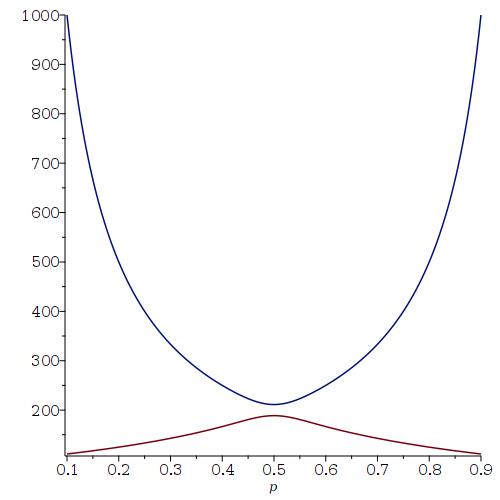}
    \caption{In this figure, the goal is to get 100 Heads AND (top) / OR (bottom) 100 Tails.}
  \end{subfigure}
  \begin{subfigure}[b]{0.32\textwidth}
      \centering
      \includegraphics[height=2in]{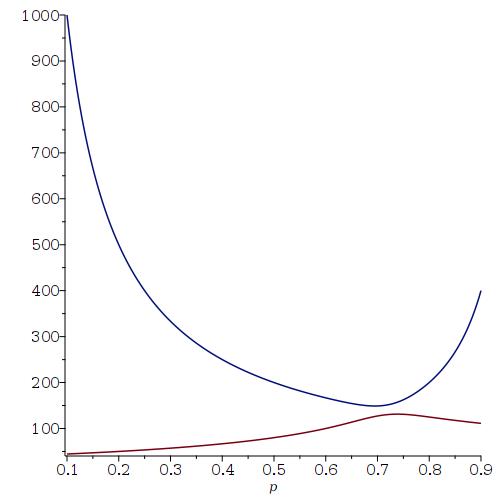}
       \caption{In this figure, the goal is to get 100 Heads AND (top) / OR (bottom) 40 Tails.}
   \end{subfigure}
  \begin{subfigure}[b]{0.32\textwidth}
       \centering
       \includegraphics[height=2in]{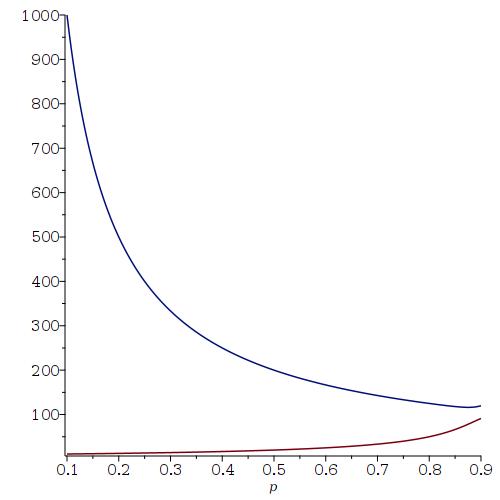}
       \caption{In this figure, the goal is to get 100 Heads AND (top) / OR (bottom) 10 Tails.}
   \end{subfigure}
   \caption{Each of the sub-figures shows the plots of the expected number of tosses as a function of the probability of getting Heads (from 0.1 to 0.9).}
    \label{fig: expected tosses}
\end{figure}

\begin{figure}[H]
    \centering
   \begin{subfigure}[b]{0.32\textwidth}
      \centering
     \includegraphics[height=2in]{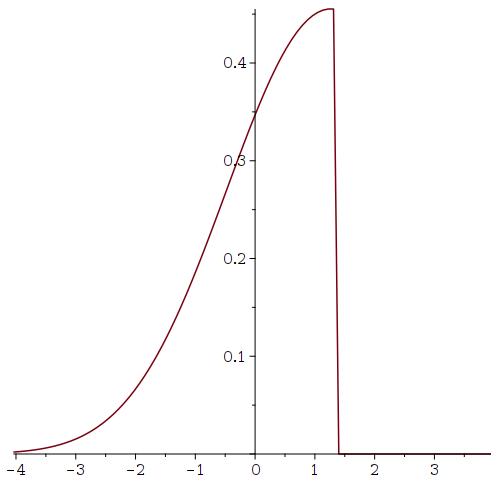}
    \caption{In this figure, the probability of getting Heads is $p=\frac{1}{2}$.}
  \end{subfigure}
  \hspace{0.5 in}
  \begin{subfigure}[b]{0.32\textwidth}
      \centering
      \includegraphics[height=2in]{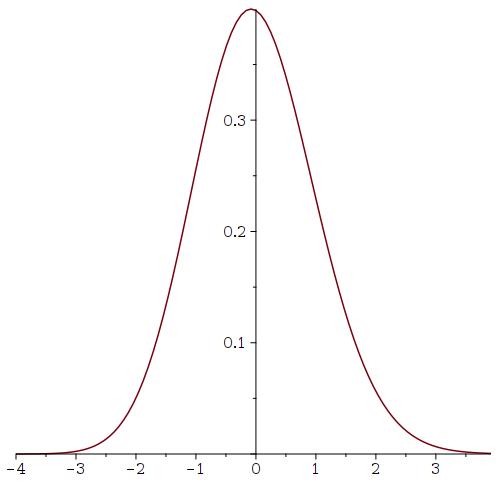}
       \caption{In this figure, the probability of getting Heads is $p=\frac{1}{3}$.}
   \end{subfigure}
   \caption{Each of the sub-figures shows the plots of the probability mass function for the (scaled) discrete random variable representing the number of coin tosses until a loaded coin reaches either 200 Heads OR 200 Tails, where with probability $p$ you get Heads.}
    \label{fig: pdf}
\end{figure}

\section{Recurrences for the Duration with a Loaded Coin and Different Target Goals}\label{sec:recurrences}

You are tossing a coin whose probability of Heads is $p$ (and hence the probability of Tails is $q\coloneqq1-p$). We consider two random variables
\begin{itemize}
    \item $X_1(n,m; p)$: The number of tosses until reaching (for the first time) either $n$ Heads OR $m$ Tails, and
    \item $X_2(n,m;p)$: The number of tosses until reaching (for the first time) $n$ Heads AND $m$ Tails.
\end{itemize}

The {\bf probability generating function} of $X_1$, in $x$, let's call it $F_1(n,m;p)(x)$ is:
\[
F_1(n,m;p)(x)=
(qx)^m \sum_{h=0}^{n-1} \binom{h+m-1}{m-1}(px)^h + (px)^n \sum_{t=0}^{m-1} \binom{t+n-1}{n-1}(qx)^t.
\]

On the other hand, that of $X_2(n,m;p)$, let's call it $F_2(n,m;p)(x)$ is:
\[
F_2(n,m;p)(x)=
(qx)^m \sum_{h=n}^{\infty} \binom{h+m-1}{m-1} (px)^h + (px)^n \sum_{t=m}^{\infty} \binom{t+n-1}{n-1} (qx)^t.
\]

Unlike the probability generating function for the Negative Binomial distribution~\cite{W},
$F_1(n,m;p)(x)$ and $F_2(n,m;p)(x)$ do not have {\bf closed-form},
but thanks to {\bf Wilf-Zeilberger Algorithmic Proof theory}~\cites{PWZ, Z}, they have the next-best thing, linear recurrences with polynomial coefficients (that happen to be third-order), in each of $n$ and $m$, that enable an efficient compilation of a table of these. We observe that $F_1+F_2$ {\it does} have a closed-form,
\begin{equation}
  \label{eq:sj1}
F_1(n,m;p)(x)+F_2(n,m;p)(x)= \left(\frac{qx}{1-px}\right)^m  + \left(\frac{px}{1-qx}\right)^n.  
\end{equation}
(This reflects that fact that for each sequence of coin tosses, the two random
variables $X_1(n,m;p)$ and $X_2(n,m;p)$ equal, in some order, the numbers of tosses required to reach $n$ Heads and to reach $m$ Tails.)
The actual recurrences are too complicated to reproduce here but can be looked up from the output file, {\tt https://sites.math.rutgers.edu/\~{}zeilberg/tokhniot/oCoinToss2.txt}.

Let $L_1(n,m;p)$ be the expectation of $X_1(n,m;p)$ and let $L_2(n,m;p)$ be the expectation of $X_2(n,m;p)$, then both satisfy the {\bf same} system of third-order linear pure recurrences. We have the following pure recurrences, in $n$ and $m$, respectively; the two recurrences are equivalent by interchanging Heads and Tails
%(and thus $p\leftrightarrow1-p$ and $n\leftrightarrow m$.)
(below $L(n,m)$ stands for either $L_1(n,m;p)$ and $L_2(n,m;p)$);
\begin{align*}
    L(n, m) =&\frac{\left(p n +p m -2 p +2 n -2\right) }{n -1}  \cdot L\left(n -1, m\right) \\
    &-\frac{\left(2 p n +2 p m -4 p +n -1\right) }{n -1}\cdot L \! \left(n -2, m\right) +\frac{p \left(m -2+n \right) }{n -1} \cdot L \! \left(n -3, m\right),\\
    L(n , m)= &-\frac{\left(p n +p m -2 p -n -3 m +4\right) }{m -1} \cdot L \! \left(n , m -1\right)\\
    &+\frac{\left(2 p n +2 p m -4 p -2 n -3 m +5\right) }{m -1} \cdot L \! \left(n , m -2\right)
-\frac{\left(p-1\right) \left(m -2+n \right) }{m -1} \cdot L \! \left(n , m -3\right).
\end{align*}
Of course $L_1(n,m;p)$ and $L_2(n,m;p)$ differ in the \emph{initial conditions}. Here they are:
\begin{align*}
    &[[L_1(1,1),L_1(1,2),L_1(1,3)], [L_1(2,1),L_1(2,2),L_1(2,3)], [L_1(3,1),L_1(3,2),L_1(3,3)] ] = \\
    & [[1, -p +2, p^{2}-3 p +3],
    [p +1, -2 p^{2}+2 p +2, 3 p^{3}-7 p^{2}+3 p +3], \\
    & [p^{2}+p +1, -3 p^{3}+2 p^{2}+2 p +2, 6 p^{4}-12 p^{3}+3 p^{2}+3 p +3] ],
\end{align*}
and
\begin{align*}
    &[[L_2(1,1),L_2(1,2),L_2(1,3)], [L_2(2,1),L_2(2,2),L_2(2,3)], [L_2(3,1),L_2(3,2),L_2(3,3)]] =\\
    & \Biggl[\left[\frac{p^{2}-p +1}{p \left(1-p\right)}, -\frac{p^{3}-3 p^{2}+p -1}{p \left(1-p\right)}, \frac{p^{4}-4 p^{3}+6 p^{2}-p +1}{p \left(1-p\right)}\right], \\
    & \left[\frac{p^{3}-2 p +2}{p \left(1-p\right)}, -\frac{2 p^{4}-4 p^{3}+2 p -2}{p \left(1-p\right)}, \frac{3 p^{5}-10 p^{4}+10 p^{3}-2 p +2}{p \left(1-p\right)}\right], \\
    &\left[\frac{p^{4}-3 p +3}{p \left(1-p\right)}, -\frac{3 p^{5}-5 p^{4}+3 p -3}{p \left(1-p\right)}, \frac{3 \left(2 p^{6}-6 p^{5}+5 p^{4}-p +1\right)}{p \left(1-p\right)} \right] \Biggr].
\end{align*}

These recurrences are implemented in procedures {\tt fAveF(n,m,p)} and {\tt FaveF(n,m,p)} respectively. For example to find the expected number of coin-tosses it takes if you toss a loaded coin whose probability of Heads is $\frac{1}{3}$ until it reaches, for the first time $100\,i$ Heads OR $200\,i$ Tails, for $1 \leq i \leq 7$, type:

\begin{itemize}
    \item[] {\tt restart: read `CoinToss.txt`:t0:=time(): }
    \item[] {\tt evalf([seq(faveF(100*i,200*i,1/3),i=1..7)]);time()-t0; } 
\end{itemize}
getting
\[ [285.3561686, 579.2804255, 874.6196952, 1170.690974, 1467.229920, 1764.101012, 2061.223764], \]
and it took $0.563$ seconds.

For comparison, if you do it directly, {\bf not} using the recurrence, but rather the definition as a sum, typing
\begin{itemize}
    \item[] {\tt  restart: read `CoinToss.txt`: t0:=time(): }
    \item[] {\tt evalf([seq(fave(100*i,200*i,1/3),i=1..7)]);time()-t0; } 
\end{itemize}
you would get the same output, but it took more than $12$ seconds.

What about $L_2(n,m;p)$, i.e. $L_2(100i,200i;\frac{1}{3})$? Type:
\begin{itemize}
    \item[] {\tt restart: read `CoinToss.txt`: }
    \item[] {\tt t0:=time():evalf([seq(FaveF(100*i,200*i,1/3),i=1..7)]);time()-t0; }
\end{itemize}
getting
\[ [314.6438314, 620.7195745, 925.3803048, 1229.309026, 1532.770080, 1835.898988, 2138.776236], \]
and this took $0.561$ seconds, and the direct way took more than $12$ seconds.

We observe that Equation~\eqref{eq:sj1} implies the relation
\begin{align}\label{l1l2}
  L_1(n,m;p)+L_2(n,m;p) = \frac{n}{p}+\frac{m}{1-p}.
\end{align}
We also observe that for positive integers $a$ and $b$ we have the {\bf explicit} expressions,
\begin{align}\label{l1ab}
L_1(an,bn; \frac{a}{a+b}) &= (a+b)n\left(1-\frac{((a+b)n)!}{(an)!(bn)!} \cdot \left( \frac{a^a b^b}{(a+b)^{a+b}} \right)^n \right), \\\label{l2ab}
L_2(an,bn; \frac{a}{a+b})&= (a+b)n\left(
1+\frac{((a+b)n)!}{(an)!(bn)!} \cdot \left( \frac{a^a b^b}{(a+b)^{a+b}} \right)^n \right).
\end{align}
These are asymptotically
\[
(a+b)n\left(1 \pm \sqrt{ \frac{a+b}{2ab\pi}} \cdot \frac{1}{\sqrt{n}} \right).
\]

\begin{remark}
For any specific positive integers $a$ and $b$, the expressions for $L_1(an,bn; \frac{a}{a+b})$ and $L_2(an,bn; \frac{a}{a+b})$ are routinely provable using WZ algorithmic proof theory, but we are unable to prove it automatically, i.e. using algorithmic proof theory, for all positive integers $a$ and $b$. However, using ``human ingenuity'' this can be proved in general. We present a proof in Section~\ref{Spfl1ab}.
\end{remark}

So $L_1(an,bn; \frac{a}{a+b})/((a+b)(n))$ and $L_2(an,bn; \frac{a}{a+b})/((a+b)(n))$ converge slowly (as $n^{-1/2}$) to $1$ as $n$ goes to infinity. On the other hand if $p>\frac{a}{a+b}$ then $L_1(an,bn;p)/n$ and $L_2(an,bn;p)/n$ converge exponentially fast to $bp$,
and if $p<\frac{a}{a+b}$ then they converge exponentially fast to $ap$. This makes sense, since when a coin is loaded in favor of achieving your goal you should expect to achieve your goal only a bit later than if the other side of the coin didn't matter.

\begin{remark}\label{RVolkov}
Volkov and Wiktorsson~\cite{VW} recently studied some related aspects of the case ``$n$ Heads or $n$ Tails'' (thus with $m=n$).
In particular, they study~\cite[Theorem 2.1]{VW} the expectation of (number of Heads - number of Tails) when we stop having reached our goal after $X_1(n,n; p)$ tosses. (This is $S_{\Xn}$ in the notation of Section~\ref{Shuman} below.) 
By Wald's identity~\cite[Theorem 10.14.3]{Gut}, this expectation equals
$(p-q)L_1(n,n;p)$, and thus the formula in~\cite[Theorem 2.1]{VW} is
equivalent to
\begin{align}\label{vw}
  L_1(n,n;p)=n\sum_{j=0}^{n-1} \Catalan_j (pq)^j,
\end{align}
where 
\begin{align}\label{Catalan}
\Catalan_j\coloneqq\frac{(2j)!}{j!\,(j+1)!},  
\end{align}
are the Catalan numbers.
In the fair case $p=\frac12$, it is easily verified that Equation~\eqref{vw} agrees
with the expression given by Equation~\eqref{l1ab} in the special case when $a=b=1$,
\begin{align}\label{vw2}
L_1(n,n;\tfrac12)
=  2n \left(1-\frac{(2n)!}{n!^2}\cdot 4^{-n} \right).
\end{align}
\end{remark}

\section{Explicit expressions for the moments of the number of tosses until getting \\ $n$ Heads or $n$ Tails with a fair coin}\label{sec:Explicit-exprs}

The probability generating function for the number of tosses until a fair coin reaches $n$ Heads OR $n$ Tails is
\begin{align*}
(\tfrac12 x)^n \sum_{h=0}^{n-1} \binom{h+n-1}{n-1}(\tfrac12x)^h + (\tfrac12 x)^n \sum_{t=0}^{n-1} \binom{t+n-1}{n-1}(\tfrac12 x)^t=(\tfrac12)^{n-1} \sum_{h=0}^{n-1} \binom{h+n-1}{n-1}(\tfrac12)^{h}x^{h+n}.
\end{align*}

Recall that the $r$-th {\bf factorial moment} of a random variable $X$ is
\[
\E[X(X-1)\dots (X-r+1)]= r!\E\left[\binom{X}{r}\right] .
\]

Let $A(n,r)$ be the the $r$-th factorial moment of our random variable $X=X_1(n,n;\tfrac12)$
(number of tosses of a fair coin until you get for the first time $n$ Heads OR $n$ Tails). We have,
\[
A(n,r)=(\tfrac12)^{n-1} \sum_{h=0}^{n-1} \binom{h+n-1}{n-1}r! \binom{h+n}{r}\left(\tfrac12\right)^{h}.
\]

For each \emph{specific} $r$, this can be evaluated as a closed-form expression in $n$, and Maple can do it easily for small $r$, but as $r$ gets larger, it becomes harder and harder. There is no closed-form expression in $r$. Luckily, thanks to the Zeilberger algorithm~\cites{PWZ, Z}, one can get the following linear recurrence equation for $A(n,r)$ in $r$, where we abbreviate $\Cn\coloneqq n\binom{2n}{n}/4^n$:
\begin{align}\label{Arec}
A(n,r)= 2n A(n,r-1) + (r-1)(r-2)A(n,r-2) - 4n  \binom{2n-1}{r-2}(r-2)!\,\Cn  
\end{align}
subject to the initial conditions,
\begin{align}\label{Ainit}
A(n,1)=2n-2\Cn, \quad A(n,2)=4n^2-8n\Cn.  
\end{align}

This enables a very fast computation of $A(n,r)$ for many $r$. Once we have them, Maple can easily compute the (usual) moments
\[
\E[X^r]= \sum_{i=0}^{r} S(r,i)A(n,i),
\]
where $S(r,i)$ are the Stirling numbers of the second kind.

Now Maple can easily compute the \emph{central moments} where
$\mu\coloneqq\E[X]=A(n,1)$ (which is $2n-2\Cn$ by Equation~\eqref{Ainit}),
\[
\E[(X-\mu)^r]= \sum_{i=0}^{r} \binom{r}{i}(-\mu)^{r-i} \, \E[X^i].
\]

In particular the variance $\sigma^2\coloneqq \E[(X-\mu)^2]$. Finally it can take the limits of the scaled central moments
\[
\lim_{n \rightarrow \infty} \frac{\E[(X-\mu)^r]}{\sigma^r},
\]
and {\bf{surprise!}} They are exactly the same as the central scaled moments of $-|N(0,1)|$, that are easily computed by Maple. We verified it up to $200$ moments, but could have easily gone further. See the output file:
{\tt https://sites.math.rutgers.edu/\~{}zeilberg/tokhniot/oCoinToss4.txt}.

But in order to prove it for \emph{all} moments, we need some human ingenuity and paper-and-pencil good-old-traditional math.

\section{The Human Touch}\label{Shuman}

We now give a mathematical proof of the moment asymptotics found above,
using standard probabilistic methods and results. 
Similar results have presumably been shown several times earlier,
one recent example is \cite{VW}, but for completeness we give detailed proofs.

\subsection{Convergence in distribution, $n$ Heads or $n$ Tails}\label{Sor}
Let $(\xi_i)_0^\infty$ be an infinite sequence of independent fair coin
tosses, with $\xi_i=1$ representing ``Heads'' and $\xi_i=-1$ representing ``Tails''. Let $S_N\coloneqq \sum_{i=1}^N \xi_i$, for $N\ge0$. Thus $S_N$ is the total profit after $N$ fair coin tosses for a player betting on Heads.

Let $H_N$ and $T_N$ be the number of Heads and Tails, respectively, in the first $N$ tosses. Thus
\begin{align}\label{a01}
  H_N&=\frac{N+S_N}{2}, \qquad T_N=\frac{N-S_N}2.
\end{align}
We write for simplicity $\Xn$ for $X_1(n,n;\frac12)$., i.e.,
\begin{align}
%\label{a02}
\Xn\coloneqq\min\{N : H_N=n \text{ or } T_N=n \}
=\min\{N : \max{(H_N,T_N)}=n \}
.\end{align}
Note that 
\begin{align}\label{a2+}
H_N+T_N&=N, \\ \label{a2-}
H_N-T_N &= S_N.  
\end{align}
In particular, 
\begin{align}\label{a3}
\Xn=H_{\Xn}+T_{\Xn} \leqslant 2n.  
\end{align}
Furthermore, at time $\Xn$, one of $H_{\Xn}$ and $T_{\Xn}$ equals $n$ while the other is smaller. By Equation~\eqref{a2-}, the smaller one is $n-|S_{\Xn}|$,
and thus Equation~\eqref{a3} yields
\begin{align}\label{a4}
  \Xn=2n-|S_{\Xn}|.
\end{align}
Hence, the random variable $2n-\Xn$ that we are interested in is $|S_{\Xn}|$.
In particular, the centered variable 
\begin{align}\label{a4.5}
  \Xn-\E[\Xn] = -\bigl(|S_{\Xn}|-\E[S_{\Xn}]\bigr).
\end{align}

The idea to analyse $S_\Xn$ is that $\Xn\approx 2n$, and thus
$S_{\Xn}\approx S_{2n}$, which has a nice normal limit by the central limit
theorem. 
More precisely, we have the following, where $\dto$ denotes convergence in distribution. 
(This is included in \cite[Theorem 2.4(b)]{VW}, where a different proof is
given.)
\begin{lemma}\label{L1} 
As $\ntoo$,
\begin{align}\label{a11}
  \frac{S_\Xn}{\sqrt{n}} \dto N(0,2),
\end{align}
where $N(0,2)$ denotes a normal random variable with mean $0$ and variance $2$.
\end{lemma}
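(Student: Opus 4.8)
The plan is to make rigorous the heuristic that $\Xn \approx 2n$ forces $S_\Xn \approx S_{2n}$, and then invoke the ordinary central limit theorem for the simple random walk. First I would recall from the excerpt, specifically Equations~\eqref{a3} and~\eqref{a4}, that $0 \le 2n - \Xn = |S_\Xn| \le n$, and moreover that $|S_\Xn| = n - \min(H_\Xn, T_\Xn)$; the key qualitative input is that $\Xn$ is a stopping time with $\Xn \le 2n$ deterministically. The natural comparison point is $N = 2n$ (or $N = \floor{2n}$, but $2n$ is an integer here), and I would write $S_\Xn = S_{2n} - (S_{2n} - S_\Xn)$, so it suffices to show two things: that $S_{2n}/\sqrt n \dto N(0,2)$, and that the correction term $(S_{2n} - S_\Xn)/\sqrt n \pto 0$. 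The first is immediate from the classical CLT, since $S_{2n}$ is a sum of $2n$ i.i.d.\ mean-zero variance-one terms, so $S_{2n}/\sqrt{2n} \dto N(0,1)$, i.e.\ $S_{2n}/\sqrt n \dto N(0,2)$. Then Slutsky's theorem finishes the argument once the correction is shown to be negligible.

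The heart of the matter is therefore controlling $S_{2n} - S_\Xn = \sum_{i=\Xn+1}^{2n}\xi_i$, a sum of $2n - \Xn = |S_\Xn|$ increments. The obstacle is the self-referential nature of this bound: the number of terms being summed is itself $|S_\Xn|$, which is of order $\sqrt n$ (we expect), so naively $|S_{2n} - S_\Xn|$ could be as large as $|S_\Xn|$, which is exactly the same order as $S_\Xn$ itself — so a crude triangle-inequality bound is useless. I would instead argue as follows. Fix $\varepsilon > 0$ and a large constant $K$. On the event $\{|S_\Xn| \le K\sqrt n\}$, the walk after time $\Xn$ runs for at most $K\sqrt n$ further steps; I would bound $\max_{0 \le j \le K\sqrt n} |S_{\Xn + j} - S_\Xn|$ using the strong Markov property at the stopping time $\Xn$, so that conditionally the increments $(\xi_{\Xn+1}, \xi_{\Xn+2}, \dots)$ form a fresh fair-coin sequence, and then Kolmogorov's maximal inequality (or Doob's $L^2$ inequality) gives $\P\bigl(\max_{0\le j \le K\sqrt n}|S_{\Xn+j}-S_\Xn| > \varepsilon \sqrt n \bigr) \le K\sqrt n / (\varepsilon^2 n) = K/(\varepsilon^2 \sqrt n) \to 0$. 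Separately, I need $\P(|S_\Xn| > K\sqrt n) $ to be small uniformly in $n$ for large $K$: here I would use the deterministic bound $|S_\Xn| \le \max_{0 \le N \le 2n} |S_N|$ together with Doob's inequality applied to the martingale $(S_N)$, giving $\P(\max_{N \le 2n}|S_N| > K\sqrt n) \le \E[S_{2n}^2]/(K^2 n) = 2n/(K^2 n) = 2/K^2$, which is $< \varepsilon$ for $K$ large. Combining, $(S_{2n} - S_\Xn)/\sqrt n \pto 0$.

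Assembling the pieces: for any bounded continuous $f$ and any $\delta > 0$, choose $K$ with $2/K^2 < \delta$; on the complement of the two bad events just described, $|S_\Xn - S_{2n}| \le \varepsilon\sqrt n$, so $S_\Xn / \sqrt n$ and $S_{2n}/\sqrt n$ differ by at most $\varepsilon$ except on an event of probability at most $\delta + K/(\varepsilon^2\sqrt n)$. Letting $\ntoo$ and then $\varepsilon \to 0$, $\delta \to 0$, a standard $\varepsilon$–$\delta$ argument (equivalently, direct application of Slutsky's theorem) shows $S_\Xn/\sqrt n$ has the same distributional limit as $S_{2n}/\sqrt n$, namely $N(0,2)$. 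The one technical point to state carefully is the use of the strong Markov property / optional sampling for the shifted walk at the stopping time $\Xn$ — since $\Xn$ is a bounded stopping time this is unproblematic, but it is the step I would write out most explicitly. I expect the main obstacle, conceptually, to be precisely this: recognizing that the correction term must be controlled via a maximal inequality over a random-but-bounded number of steps rather than by a direct estimate on $|S_\Xn|$, so that one avoids circularity.
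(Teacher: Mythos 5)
Your proof is correct, and it takes a genuinely different (though related) route from the paper's. Both arguments share the skeleton ``CLT for a deterministic-time sum, plus a maximal inequality to kill the correction, plus Slutsky,'' but the decompositions differ. The paper anchors at the deterministic time $n_1=2n-\nii$, which lies \emph{before} $\Xn$ w.h.p.\ (a fact it must first establish, using the CLT at time $n_1$ to show $|S_{n_1}|<\nii$ and hence $\Xn>n_1$); it then applies Kolmogorov's inequality over the \emph{deterministic} window $[n_1,2n]$ of length $\nii$, so no strong Markov property is needed and a single $o(1)$ bound suffices. You instead anchor at $2n$, which lies after $\Xn$ always, exploit the exact identity $2n-\Xn=|S_\Xn|$, and control the fluctuation over the \emph{random} window $[\Xn,2n]$; this forces you to (i) invoke the strong Markov property at the bounded stopping time $\Xn$ so that the post-$\Xn$ increments are fresh, and (ii) supply a separate tightness bound $\P(|S_\Xn|>K\sqrt n)\le 2/K^2$ via Kolmogorov/Doob on $\max_{N\le 2n}|S_N|$, followed by a two-parameter limit in $n$ and $K$. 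What your approach buys is directness (no need to first prove $\Xn>n_1$ w.h.p., and the window length $|S_\Xn|=O_P(\sqrt n)$ is sharp rather than the generous $n^{2/3}$); what the paper's buys is elementarity, since it never conditions at a random time. You correctly identified the one genuinely delicate point, namely that the number of leftover steps is itself $|S_\Xn|$ and a triangle-inequality bound is circular; your resolution via a maximal inequality over at most $K\sqrt n$ fresh steps is sound.
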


\begin{proof}  
One elegant way to prove this rigorously uses Donsker's theorem
on convergence of the entire process $(S_N)_{N=1}^{2n}$, after suitable
scaling, to a Brownian motion~\cite[Theorem 7.7.13]{Gut}.
But we choose instead to
proceed here by a related but somewhat more elementary approach.

We split the process of coin tosses into two phases; in the first we toss 
$n_1\coloneqq2n-\nii$ times, and in the second we proceed to the end.
% ($2n^{2/3}$ is just for convenience; any sequence that is $o(n)$ and
% $\omega(n^{1/2})$ would do. Moreover, I really mean $\floor{n^{2/3}}$, but I
% omit floors here for convenience.)

By the central limit theorem, since $\E\xi_i=0$ and $\Var\xi_i=1$,
as $n\to\infty$
we have
${S_{n_1}}/{\sqrt{n_1}}\dto N(0,1)$ and thus
\begin{align}\label{a5}
  \frac{S_{n_1}}{\sqrt{n}}
= \sqrt{\frac{n_1}{n}}\cdot\frac{S_{n_1}}{\sqrt{n_1}}
=\bigl(\sqrt2+o(1)\bigr)\frac{S_{n_1}}{\sqrt{n_1}}
\dto N(0,2).
\end{align}
In particular, w.h.p. (with high probability, meaning with probability tending to 1 as $n\to\infty$),
$|S_{n_1}|<\nii=2n-n_1$, and thus
$H_{n_1},T_{n_1}<n$ by Equation~\eqref{a01}, 
so after $n_1$ tosses we have not yet reached the stopping time $\Xn$. 
We may thus assume this event, i.e., $\Xn> n_1$,
in the rest of the proof.

Let $n_2=\nii$, so $n_1+n_2=2n$. Then, by the assumption just made and Equation~\eqref{a3},  
\begin{align}
  \label{a6}
n_1 \leqslant \Xn \leqslant 2n=n_1+n_2.
\end{align}
Let 
\begin{align}\label{a7}
  S'_k\coloneqq S_{n_1+k}-S_{n_1} = \sum_{i=1}^k \xi_{n_1+i}.
\end{align}
By Kolmogorov's inequality~\cite[Theorem 3.1.6]{Gut}, for every $x>0$, 
\begin{align}\label{a8}
  \P\bigl(\max_{1\leqslant k\leqslant n_2}|S'_k| >x\bigr)
\leqslant \frac{\sum_{i=1}^{n_2}\Var(\xi_{i+n_1}) }{x^2}
=\frac{n_2}{x^2}
=\frac{\nii}{x^2}.
\end{align}
In particular, 
\begin{align}\label{a85}
    \P\bigl(\max_{1\leqslant k\leqslant n_2}|S'_k| >n^{0.4}\bigr) \to 0,
\end{align}
and thus w.h.p., recalling Equations~\eqref{a6} and \eqref{a7},
\begin{align}
\label{a9}
|S_\Xn-S_{n_1}|
=|S'_{\Xn-n_1}|
\leqslant  \max_{1\leqslant k\leqslant n_2}|S'_k|  
\leqslant n^{0.4}.
\end{align}
Hence, with $\pto$ denoting convergence in probability,
\begin{align}\label{a10}
  \frac{S_\Xn-S_{n_1}}{\sqrt{n}} \pto 0,
\end{align}
which together with Equation~\eqref{a5} yields, 
by the Cram\'er--Slutsky theorem \cite[Theorem 5.11.4]{Gut},
\begin{align}\label{b1}
  \frac{S_\Xn}{\sqrt{n}} 
=
  \frac{S_\Xn-S_{n_1}}{\sqrt{n}} 
+
  \frac{S_{n_1}}{\sqrt{n}} \dto N(0,2).
\end{align}
which shows~\eqref{a11}.
\end{proof}

We can now easily show that the asymptotic distribution of $X_1(n)$ is as
found empirically above.
\begin{theorem}\label{T1}
Let $Z\sim N(0,1)$ denote a standard normal variable.
Then, as $n\to\infty$,
  \begin{align}
    \label{t1a}
  \frac{\Xn-2n}{\sqrt{n}} \dto  -\sqrt2|Z|,
  \end{align}
and, for the centered variables,
  \begin{align}
    \label{t1b}
  \frac{\Xn-\E[\Xn]}{\sqrt{n}} \dto  -\sqrt2\bigl(|Z|-\E|Z|\bigr).
  \end{align}
\end{theorem}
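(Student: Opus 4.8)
The plan is to deduce everything from Lemma~\ref{L1} together with the algebraic identities~\eqref{a4} and~\eqref{a4.5} already recorded in the excerpt. First I would establish~\eqref{t1a}. By Equation~\eqref{a4} we have $\Xn - 2n = -|S_\Xn|$, so
\[
\frac{\Xn-2n}{\sqrt n} = -\frac{|S_\Xn|}{\sqrt n} = -\left|\frac{S_\Xn}{\sqrt n}\right|.
\]
Lemma~\ref{L1} gives $S_\Xn/\sqrt n \dto N(0,2)$, i.e.\ $S_\Xn/\sqrt n \dto \sqrt2\,Z$ with $Z\sim N(0,1)$. Since $x\mapsto -|x|$ is a continuous function on $\mathbb R$, the continuous mapping theorem \cite[Theorem 5.10.4]{Gut} yields $-|S_\Xn/\sqrt n| \dto -|\sqrt2\,Z| = -\sqrt2\,|Z|$, which is exactly~\eqref{t1a}.

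For~\eqref{t1b} the one extra ingredient needed is control of the centering constant $\E[\Xn]$. From~\eqref{a4}, $\E[\Xn] = 2n - \E|S_\Xn|$, so
\[
\frac{\Xn-\E[\Xn]}{\sqrt n} = \frac{\Xn - 2n}{\sqrt n} + \frac{\E|S_\Xn|}{\sqrt n} = -\left|\frac{S_\Xn}{\sqrt n}\right| + \frac{\E|S_\Xn|}{\sqrt n}.
\]
So it suffices to show $\E|S_\Xn|/\sqrt n \to \sqrt2\,\E|Z|$ and then apply the Cram\'er--Slutsky theorem \cite[Theorem 5.11.4]{Gut}, combining with~\eqref{t1a}, to conclude $(\Xn-\E[\Xn])/\sqrt n \dto -\sqrt2|Z| + \sqrt2\E|Z| = -\sqrt2(|Z|-\E|Z|)$. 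The convergence $\E|S_\Xn|/\sqrt n \to \sqrt2\,\E|Z|$ is convergence in distribution (from Lemma~\ref{L1}, via continuous mapping) plus convergence of first moments, so it requires a uniform integrability argument for the family $\{|S_\Xn|/\sqrt n\}_n$.

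The main obstacle is precisely this uniform integrability / moment-convergence step. The cleanest route is to note that $|S_\Xn| = 2n - \Xn \le 2n$ is a.s.\ bounded by $2n$, but a bound by $2n$ is far too weak after dividing by $\sqrt n$; instead I would bound $|S_\Xn|$ by $\max_{0\le N\le 2n}|S_N|$, which by Doob's $L^2$ maximal inequality (or Kolmogorov's inequality, already invoked as \cite[Theorem 3.1.6]{Gut}) satisfies $\E\bigl[(\max_{0\le N\le 2n}|S_N|)^2\bigr] \le 4\,\E[S_{2n}^2] = 8n$. Hence $\E\bigl[(|S_\Xn|/\sqrt n)^2\bigr] \le 8$ uniformly in $n$, so the family $\{|S_\Xn|/\sqrt n\}$ is bounded in $L^2$ and therefore uniformly integrable; combined with the distributional convergence this gives convergence of the first moments \cite[Theorem 5.5.9]{Gut}, completing the proof. (As a sanity check one may verify directly that $\E|Z| = \sqrt{2/\pi}$, so $\sqrt2\,\E|Z| = 2/\sqrt\pi$, which matches the $\Theta(\sqrt n)$ correction term $2C_n = 2n\binom{2n}{n}4^{-n} \sim 2\sqrt{n/\pi}$ appearing in $\E[\Xn] = 2n - 2C_n$ from Equations~\eqref{Ainit} and~\eqref{vw2}.)
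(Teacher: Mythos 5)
Your proof of \eqref{t1a} is exactly the paper's: rewrite $S_\Xn/\sqrt n \dto \sqrt2 Z$, apply the continuous mapping theorem to $x\mapsto|x|$, and invoke the identity $\Xn=2n-|S_\Xn|$ from \eqref{a4}. Where you diverge is in handling the centering for \eqref{t1b}. The paper disposes of the constant $(2n-\E[\Xn])/\sqrt n \to \sqrt2\,\E|Z|$ by simply quoting the exact formula $2n-\E[\Xn]=2\Cn$ from \eqref{Ainit} (computed symbolically in Section~\ref{sec:Explicit-exprs}) together with Stirling, or alternatively by forward reference to Theorem~\ref{Tmom}, whose uniform integrability comes from renewal theory applied to the hitting times $\nu_H(n)$, $\nu_T(n)$. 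You instead prove the needed moment convergence $\E|S_\Xn|/\sqrt n\to\sqrt2\,\E|Z|$ from scratch: bound $|S_\Xn|\le\max_{0\le N\le 2n}|S_N|$ using $\Xn\le 2n$, get a uniform $L^2$ bound $\E\bigl[(\max_N|S_N|)^2\bigr]\le 4\E[S_{2n}^2]=8n$ from Doob's maximal inequality, and conclude uniform integrability and hence convergence of first moments. This is correct and arguably more self-contained than either of the paper's two options, since it needs neither the closed form for $\E[\Xn]$ nor the renewal-theoretic input of Theorem~\ref{Tmom}; the paper's route is shorter only because the explicit expectation was already available. One small caveat: your parenthetical suggestion that Kolmogorov's inequality could replace Doob's $L^2$ inequality does not give the stated second-moment bound on the maximum (integrating the tail bound $n/x^2$ against $2x\,dx$ diverges); it does still yield uniform integrability of the first power directly, so the conclusion survives, but as written the $L^2$ claim should be attributed to Doob's inequality only.
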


\begin{proof}
We can write Equation~\eqref{a11} as
\begin{align}
\label{a12}
  \frac{S_\Xn}{\sqrt{n}} \overset{\mathrm{d}}{\longrightarrow} \sqrt2Z.
\end{align}
Hence, by the continuous mapping theorem~\cite[Theorem 5.10.4]{Gut},
%\overset{\mathrm{d}}{\longrightarrow} 
\begin{align}\label{b2}
  \frac{|S_\Xn|}{\sqrt{n}} \dto \sqrt2\,|Z|,
\end{align}
and~\eqref{t1a} follows by Equation~\eqref{a4}.
Finally,~\eqref{t1b} follows from~\eqref{t1a} and
  \begin{align}\label{b1a}
  \frac{2n-\E[\Xn]}{\sqrt{n}} 
\longrightarrow
\sqrt2 \E|Z| =2/\sqrt\pi,
  \end{align}
which follows from Equation~\eqref{Ainit} which gives $2n-\E[\Xn]=2\Cn$,
or from Theorem~\ref{Tmom} below.
\end{proof}

We postpone further discussion of convergence of moments until Section~\ref{Smom}.

\subsection{Convergence in distribution, $n$ Heads and $n$ Tails}\label{Sand}

We can argue similarly with 
\begin{align}
\label{b02}
  \Yn\coloneqq\min\{N\; : H_N\geqslant n \text{ and  } T_N \geqslant n \} =\min\{N: \min{(H_N,T_N)}=n \}
.\end{align}
Now, $\Yn\geqslant 2n$. Furthermore, one of $H_\Yn$ and $T_\Yn$ is $n$, and the
other is, by Equation~\eqref{a2-}, $n+|S_\Yn|$. Consequently, by Equation~\eqref{a2+},
\begin{align}
  \label{b4}
\Yn=H_\Yn+T_\Yn
=2n+|S_\Yn|.
\end{align}
In analogy with Lemma~\ref{L1} we have:
\begin{lemma}\label{L2} 
As $\ntoo$,
\begin{align}\label{b11}
  \frac{S_\Yn}{\sqrt{n}} \dto N(0,2).
\end{align}
\end{lemma}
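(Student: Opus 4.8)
The plan is to mimic the proof of Lemma~\ref{L1} almost verbatim, with the only change being that the stopping time $\Yn$ is now at least $2n$ rather than at most $2n$, so the two-phase decomposition must be set up slightly differently to guarantee that the second phase has length $o(n)$ \emph{with high probability} (rather than deterministically). First I would observe that by the continuous mapping theorem and Equation~\eqref{b4}, the statement~\eqref{b11} is exactly what is needed to transfer the central limit theorem for $S_{2n}$ to $S_\Yn$, just as~\eqref{a11} was used in Theorem~\ref{T1}; so the content is genuinely the same as Lemma~\ref{L1}.

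The key steps, in order: (1) Split the tosses into a first phase of $n_1\coloneqq 2n+\nii$ tosses and a second phase running to the end. (2) By the central limit theorem, $S_{n_1}/\sqrt n = \sqrt{n_1/n}\cdot S_{n_1}/\sqrt{n_1} = (\sqrt2+o(1))S_{n_1}/\sqrt{n_1}\dto N(0,2)$, exactly as in Equation~\eqref{a5}. (3) Show that w.h.p. $\Yn\le n_1$: indeed $H_{n_1}=\tfrac12(n_1+S_{n_1})$ and $T_{n_1}=\tfrac12(n_1-S_{n_1})$, and since $n_1=2n+\nii$ we have $\min(H_{n_1},T_{n_1})=n+\tfrac12(\nii-|S_{n_1}|)\ge n$ precisely when $|S_{n_1}|\le\nii$, which holds w.h.p.\ because $S_{n_1}=O_p(\sqrt n)=o(\nii)$. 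Hence w.h.p.\ $2n\le \Yn\le n_1=2n+n_2$ with $n_2\coloneqq\nii$. (4) With $S'_k\coloneqq S_{n_1+\min(k,0)}\dots$ — more carefully, since $\Yn\le n_1$ we instead define the increments \emph{backwards} from $n_1$, or equivalently note $|S_\Yn-S_{n_1}|\le\max_{1\le k\le n_2}|S_{n_1}-S_{n_1-k}|$, and apply Kolmogorov's inequality to the reversed random walk (which has the same distribution) to get $\max_{1\le k\le n_2}|S_{n_1}-S_{n_1-k}|\le n^{0.4}$ w.h.p., so $(S_\Yn-S_{n_1})/\sqrt n\pto0$. (5) Conclude by Cram\'er--Slutsky that $S_\Yn/\sqrt n = (S_\Yn-S_{n_1})/\sqrt n + S_{n_1}/\sqrt n\dto N(0,2)$.

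Alternatively, and perhaps more cleanly, I would avoid reversing the walk by choosing $n_1\coloneqq 2n-\nii$ (the same as in Lemma~\ref{L1}) and a \emph{longer} second phase $n_2\coloneqq 3\nii$ say, so that w.h.p.\ $\Yn$ lies in $[n_1,n_1+n_2]$: the lower end because $H_{n_1},T_{n_1}<n$ w.h.p., the upper end because after $2n+\nii$ further-than-minimal tosses one of the counts exceeds $n$ unless $|S|$ is atypically large, again controlled by Kolmogorov's inequality on the forward increments $S'_k=S_{n_1+k}-S_{n_1}$. Then the same bound $|S_\Yn-S_{n_1}|\le\max_{1\le k\le n_2}|S'_k|\le n^{0.4}$ w.h.p.\ applies directly. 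The main obstacle — really the only delicate point — is the upper bound $\Yn\le n_1+n_2$ w.h.p.: unlike in the ``or'' case there is no deterministic cap on $\Yn$, so one must argue that the walk $S'_k$ does not drift far enough from $0$ to keep $\min(H_{n_1+k},T_{n_1+k})<n$ for all $k\le n_2$; but since $\min(H_{n_1+k},T_{n_1+k}) = \tfrac12(n_1+k) - \tfrac12|S_{n_1+k}|$ and $n_1+n_2-2n = 2\nii$, this fails only if $|S_{n_1+n_2}|>2\nii$, which has probability $o(1)$ by Equation~\eqref{a8} with $x=2\nii\asymp n^{2/3}\gg\sqrt{n_2}$. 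Everything else is a line-by-line copy of Lemma~\ref{L1}'s proof.
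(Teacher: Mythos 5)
Your proposal is correct and matches the paper's proof in essence: the paper likewise shows that w.h.p.\ $2n\le \Yn<2n+\nii$ (because $|S_{2n+\nii}|=O_p(\sqrt n)=o(n^{2/3})$ forces $H_{2n+\nii},T_{2n+\nii}>n$), then applies Kolmogorov's inequality over a window of length $O(n^{2/3})$ around $2n$ to get $(S_\Yn-S_{n_1})/\sqrt n\pto 0$, and concludes by Cram\'er--Slutsky. Your ``alternative'' version is essentially the paper's argument verbatim (the paper takes $n_1=2n-\nii$ and runs Kolmogorov over $1\le k\le 2\nii$), and your primary version --- anchoring the CLT at $2n+\nii$ and applying Kolmogorov to the reversed walk --- is only a cosmetic variant.
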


\begin{proof}
Let $n_2=\nii$ as above.
It follows from the central limit theorem, similarly to Equation~\eqref{a5},
that
\begin{align}
\label{b5}
  \frac{S_{2n+n_2}}{\sqrt{n}} \dto  N(0,2).
\end{align}
and it follows  that w.h.p. $|S_{2n+n_2}|<n_2$ and thus, by Equation~\eqref{a01},
$H_{2n+n_2},T_{2n+n_2}>n$. Hence, w.h.p.,
\begin{align}\label{b6}
2n \leqslant \Yn < 2n+n_2.
\end{align}
We use Kolmogorov's inequality \eqref{a8} again, but now for $1\leqslant k\leqslant 2n_2$,
and obtain from Equation~\eqref{b6} as in \eqref{a85}--\eqref{a10},
\begin{align}\label{b10}
  \frac{S_\Yn-S_{n_1}}{\sqrt{n}} \dto 0,
\end{align}
and thus~\eqref{b11} follows from  Equation~\eqref{a5} as in~\eqref{b1}.
%Finally,  \eqref{b11} follows from \eqref{b10} and \eqref{a5} as in \eqref{b1}.
\end{proof}

Hence we obtain, in analogy to Theorem~\ref{T1}:
\begin{theorem}\label{T2}
  We have, with notation defined above,
as $n\to\infty$,
  \begin{align}
    \label{t2a}
  \frac{\Yn-2n}{\sqrt{n}} \dto \sqrt2|Z|,
  \end{align}
and, for the centered variables,
  \begin{align}
    \label{t2b}
  \frac{\Yn-\E\Yn}{\sqrt{n}} \dto \sqrt2\bigl(|Z|-\E|Z|\bigr).
  \end{align}
\end{theorem}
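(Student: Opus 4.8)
The plan is to mirror, essentially verbatim, the argument already used for Theorem~\ref{T1}, replacing the inequality $\Xn\le 2n$ and the identity \eqref{a4} by their ``AND''-counterparts \eqref{b4}. Concretely, I would first invoke Lemma~\ref{L2}, which gives $S_\Yn/\sqrt n\dto N(0,2)$; rewriting $N(0,2)$ as $\sqrt2\,Z$ with $Z\sim N(0,1)$, the continuous mapping theorem~\cite[Theorem 5.10.4]{Gut} applied to the (continuous) map $x\mapsto|x|$ yields $|S_\Yn|/\sqrt n\dto\sqrt2\,|Z|$. Then \eqref{b4}, i.e.\ $\Yn-2n=|S_\Yn|$, converts this directly into \eqref{t2a}: $(\Yn-2n)/\sqrt n\dto\sqrt2\,|Z|$. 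Note the sign is now $+$ rather than $-$, precisely because in the ``AND'' problem the excess $|S_\Yn|$ is \emph{added} to $2n$, whereas in the ``OR'' problem it was subtracted.

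For the centered statement \eqref{t2b}, I would write
\begin{align*}
  \frac{\Yn-\E\Yn}{\sqrt n}
  = \frac{\Yn-2n}{\sqrt n} - \frac{\E\Yn-2n}{\sqrt n},
\end{align*}
and combine \eqref{t2a} with the numerical limit $(\E\Yn-2n)/\sqrt n\to\sqrt2\,\E|Z|=2/\sqrt\pi$ via the Cram\'er--Slutsky theorem~\cite[Theorem 5.11.4]{Gut}. The limit of $(\E\Yn-2n)/\sqrt n$ follows from \eqref{l1l2} (or \eqref{l1ab}--\eqref{l2ab} with $a=b=1$): since $L_1(n,n;\tfrac12)+L_2(n,n;\tfrac12)=4n$ and $2n-L_1(n,n;\tfrac12)=2\Cn$ by \eqref{Ainit}, we get $\E\Yn-2n=L_2(n,n;\tfrac12)-2n=2\Cn=2n\binom{2n}{n}4^{-n}$, and Stirling's formula gives $2\Cn\sim 2\sqrt{n/\pi}$, hence $(\E\Yn-2n)/\sqrt n\to 2/\sqrt\pi=\sqrt2\,\E|Z|$. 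Subtracting, \eqref{t2b} follows.

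I do not expect any genuine obstacle: every ingredient (Lemma~\ref{L2}, the continuous mapping theorem, Cram\'er--Slutsky, the exact formula for $\E\Yn$) is already in hand, and the only thing to be careful about is bookkeeping of signs and the constant $2/\sqrt\pi$. The one point worth a sentence of care is that the map $x\mapsto|x|$ is continuous everywhere, so the continuous mapping theorem applies without any exceptional-set caveat; and that Cram\'er--Slutsky is legitimate here because the term being subtracted converges to a \emph{constant}, not merely in distribution. If one wanted to avoid citing \eqref{Ainit}, the alternative is to quote Theorem~\ref{Tmom} (the moment-convergence result referenced in the excerpt) to get $\E\Yn-2n\to$ the stated constant rate, but using the closed-form expression for $L_2(n,n;\tfrac12)$ is cleaner and self-contained.
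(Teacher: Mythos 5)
Your proposal is correct and follows essentially the same route as the paper, which simply says the proof is ``almost identical to the proof of Theorem~\ref{T1}, now using Equation~\eqref{b4} and Lemma~\ref{L2}'' and obtains $\E[\Yn]-2n=2n-\E[\Xn]=2\Cn$ from \eqref{l1l2} and \eqref{Ainit} exactly as you do. All your bookkeeping (the sign flip from \eqref{b4}, the constant $2/\sqrt{\pi}=\sqrt2\,\E|Z|$, and the Cram\'er--Slutsky step) checks out.
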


\begin{proof}
Almost identical to the proof of Theorem~\ref{T1}, now using Equation~\eqref{b4} and Lemma~\ref{L2}; 
note also that 
\begin{align}\label{ql3}
\E[\Xn]+\E[\Yn]=L_1(n,n;\tfrac12)+L_2(n,n;\tfrac12)=4n,  
\end{align}
by Equation~\eqref{l1l2}, and thus $\E[\Yn]-2n=2n-\E[\Xn]$.
\end{proof}

\begin{remark}
Thus $\Xn$ and $\Yn$ have, apart from a sign, the same (centered) asymptotic distribution. Moreover, it is really ``the same'' $Z$ in
Theorem~\ref{T1} and~\ref{T2}: it follows from Equations~\eqref{a4}, \eqref{b4}, \eqref{a10}, and \eqref{b10}
that
\begin{align}\label{xy1}
%\frac{\Xn+\Yn-4n}{\sqrt n}
\frac{\Xn-2n}{\sqrt n}+ \frac{\Yn-2n}{\sqrt n}
=\frac{|S_\Yn|-|S_\Xn|}{\sqrt n} \dto 0.
  \end{align}
In fact, an extension of the arguments above shows that 
%$\Xn+\Yn-4n = (\Yn-2n)-(2n-\Xn)$ 
$(\Yn-2n)-(2n-\Xn)$ 
is of order $n^{1/4}$.
More precisely, we have the following result for this difference,
showing that it has an asymptotic distribution that is a mixture of normal
distributions with different variances.
\end{remark}

\begin{theorem}
As $n\to\infty$,
  \begin{align}\label{XY}
\frac{(\Yn-2n)-(2n-\Xn)}{n^{1/4}} 
=\frac{\Xn+\Yn-4n}{n^{1/4}} \dto 2^{3/4}|Z|^{1/2}W,
\end{align}
where $W$ and $Z$ are independent random variables with the standard normal 
distribution $N(0,1)$.
\end{theorem}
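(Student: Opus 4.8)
The plan is to condition on everything up to the stopping time $\Xn$ and exploit the strong Markov property, combined with the limit $|S_\Xn|/\sqrt n\dto\sqrt2\,|Z|$ already established in the proof of Theorem~\ref{T1}. First I would record the exact identity obtained by adding Equations~\eqref{a4} and \eqref{b4}:
\[
\Xn+\Yn-4n=|S_\Yn|-|S_\Xn|.
\]
Next I would observe that $S$ does not change sign on $[\Xn,\Yn]$: at time $\Xn$ exactly one of $H_\Xn,T_\Xn$ equals $n$, and by the Heads/Tails symmetry we may assume $H_\Xn=n$, so $S_\Xn=n-T_\Xn=|S_\Xn|>0$; since $H$ is nondecreasing and $T<n$ on $[\Xn,\Yn)$, we have $S=H-T>0$ there and $S_\Yn\ge0$, so $|S_\Yn|-|S_\Xn|=S_\Yn-S_\Xn$ in this case (and the same with an overall sign in the symmetric case).

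Then I would analyse the tosses after time $\Xn$; by the strong Markov property they are i.i.d.\ fair and independent of $\mathcal F_\Xn$. In the case $H_\Xn=n$, the time $\Yn$ is reached exactly when $m:=|S_\Xn|$ further Tails have occurred; letting $\eta'_j$ be the number of Heads strictly between the $(j-1)$-th and $j$-th of these Tails, the $\eta'_j$ are i.i.d.\ with $\P(\eta'_j=k)=2^{-(k+1)}$ for $k\ge0$, so $\E\eta'_j=1$ and $\Var\eta'_j=2$, and a direct count gives $|S_\Yn|=\sum_{j=1}^m\eta'_j$, whence $\Xn+\Yn-4n=\sum_{j=1}^m(\eta'_j-1)$; the symmetric case $T_\Xn=n$ yields the same formula with Heads and Tails interchanged. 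Thus, writing $\eta_j:=\eta'_j-1$ (so $\E\eta_j=0$, $\Var\eta_j=2$) and letting $\psi$ be the characteristic function of $\eta_1$, we get a.s.
\[
\E\!\left[\,e^{\,\mathrm{i}t(\Xn+\Yn-4n)/n^{1/4}}\;\middle|\;\mathcal F_\Xn\right]=\psi\!\bigl(t/n^{1/4}\bigr)^{|S_\Xn|}.
\]
To pass to the limit, use $\psi(s)=1-s^2+o(s^2)$ as $s\to0$, so (with the principal logarithm, legitimate for large $n$) $\log\psi(t/n^{1/4})=-t^2n^{-1/2}+o(n^{-1/2})$ and hence
\[
\psi\!\bigl(t/n^{1/4}\bigr)^{|S_\Xn|}=\exp\!\left(-\,t^2\,\frac{|S_\Xn|}{\sqrt n}+\frac{|S_\Xn|}{\sqrt n}\,o(1)\right).
\]
By Equation~\eqref{b2}, $|S_\Xn|/\sqrt n\dto\sqrt2\,|Z|$, so $|S_\Xn|/\sqrt n=O_{\mathbb P}(1)$, the exponent converges in distribution to $-\sqrt2\,|Z|\,t^2$, and since the left-hand side has modulus at most $1$, convergence in distribution together with uniform boundedness gives
\[
\E\!\left[\,e^{\,\mathrm{i}t(\Xn+\Yn-4n)/n^{1/4}}\right]\;\longrightarrow\;\E\!\left[e^{-\sqrt2\,|Z|\,t^2}\right].
\]
Conditioning on $Z$ shows that $2^{3/4}|Z|^{1/2}W\sim N(0,2^{3/2}|Z|)$ given $Z$, whose characteristic function at $t$ is $e^{-2^{1/2}|Z|t^2}=e^{-\sqrt2|Z|t^2}$, so $\E[e^{-\sqrt2|Z|t^2}]$ is exactly the characteristic function of $2^{3/4}|Z|^{1/2}W$, and Lévy's continuity theorem yields Equation~\eqref{XY}.

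I expect the main obstacle to be the \emph{random-index limit} in the last step: a sum of a random, unboundedly growing number $|S_\Xn|$ of centred i.i.d.\ increments, rescaled by $n^{1/4}$, must converge to the stated scale mixture of normals. The conditioning above bypasses the usual Anscombe-type bookkeeping precisely because $|S_\Xn|$ is $\mathcal F_\Xn$-measurable while the post-$\Xn$ increments are independent of $\mathcal F_\Xn$; the only genuine care needed is a uniform estimate of the $o(s^2)$ remainder of $\psi$ together with the tightness of $|S_\Xn|/\sqrt n$, which forces $\frac{|S_\Xn|}{\sqrt n}\,o(1)\pto 0$. (Alternatively one could quote Anscombe's theorem, or a functional central limit theorem for the post-$\Xn$ walk evaluated at the random time $|S_\Xn|$.)
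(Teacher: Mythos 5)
Your proof is correct, and it is in fact more complete than the paper's own argument, which is only a sketch. The overall skeleton is shared: both start from the identity $\Xn+\Yn-4n=|S_\Yn|-|S_\Xn|$, both condition on the information at time $\Xn$, both use the sign-agreement of $S_\Xn$ and $S_\Yn$ and the limit $|S_\Xn|/\sqrt n\dto\sqrt2|Z|$ to produce the normal scale mixture. The key step is handled differently, though. The paper treats the post-$\Xn$ increment $S_\Yn-S_\Xn$ as a random walk run for a \emph{random} number of steps $\Yn-\Xn\approx 2(2n-\Xn)$ and appeals to a conditional central limit theorem of Anscombe type (``arguments as in the proofs above''), then factors $\frac{|S_\Yn|-|S_\Xn|}{n^{1/4}}$ as $\frac{|S_\Yn|-|S_\Xn|}{\sqrt{2(2n-\Xn)}}\cdot\sqrt{2(2n-\Xn)/n^{1/2}}$. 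You instead re-index the post-$\Xn$ tosses by the successive new Tails and obtain the \emph{exact} representation $\Xn+\Yn-4n=\sum_{j=1}^{|S_\Xn|}(\eta'_j-1)$ with $\eta'_j$ i.i.d.\ geometric, independent of $\mathcal F_{\Xn}$, with mean $1$ and variance $2$. Since the number of summands $|S_\Xn|$ is $\mathcal F_{\Xn}$-measurable, the conditional characteristic function is exactly $\psi(t/n^{1/4})^{|S_\Xn|}$, and the limit follows from the Taylor expansion of $\psi$, tightness of $|S_\Xn|/\sqrt n$, and dominated convergence; no random-index CLT is needed. What your approach buys is precisely the rigor the paper's sketch defers: the Anscombe-type bookkeeping is replaced by an exact identity plus an elementary characteristic-function computation, and the constant $2^{3/4}$ comes out correctly ($\E[e^{-\sqrt2|Z|t^2}]$ is the characteristic function of $2^{3/4}|Z|^{1/2}W$). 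The paper's route, on the other hand, makes the structural picture (a Brownian-like excursion of length $\approx 2(2n-\Xn)$ after time $\Xn$) more visible. One small remark: your reduction ``by symmetry assume $H_\Xn=n$'' is harmless here because the conditional law of $\sum_{j=1}^{m}(\eta'_j-1)$ given $\mathcal F_{\Xn}$ is the same in the Heads-win and Tails-win cases, so the unconditional expectation of the conditional characteristic function is computed correctly; it would be worth one sentence making that explicit.
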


\begin{proof}[Sketch of proof]
Condition on $\Xn=2n-x\sqrt n$, for some $x>0$.
We see from \eqref{xy1} that then $\Yn-2n\approx 2n-\Xn=x\sqrt n$, and thus
$\Yn-\Xn\approx 2x\sqrt n =2(2n-\Xn)$. 
It follows, by arguments as in the proofs above, that, conditioned on $\Xn$,
\begin{align}\label{xy2}
  \frac{S_\Yn-S_\Xn}{\sqrt{2(2n-\Xn)}}
\dto W \in N(0,1).
\end{align}
Furthermore, it is easy to see that
$S_\Xn$ and $S_\Yn$ have the same sign (depending on whether we reach $n$
Heads or $n$ Tails first); consequently,
\eqref{xy2} implies that, still conditioned on $\Xn$,
\begin{align}\label{xy3}
  \frac{|S_\Yn|-|S_\Xn|}{\sqrt{2(2n-\Xn)}}
\dto W \in N(0,1).
\end{align}
This implies that \eqref{xy3} holds also unconditionally, with $W$ independent
of all $\Xn$. 
Hence, using also Equations~\eqref{a4}, \eqref{b4}, and \eqref{t1a},
\begin{align}\label{xy4}
\frac{\Xn-2n}{n^{1/4}}+ \frac{\Yn-2n}{n^{1/4}}
&=
  \frac{|S_\Yn|-|S_\Xn|}{n^{1/4}}
=
  \frac{|S_\Yn|-|S_\Xn|}{\sqrt{2(2n-\Xn)}}\cdot 
\sqrt{\frac{2(2n-\Xn)}{n^{1/2}}}
\notag\\&
\dto W\sqrt{2\sqrt2 |Z|},
\end{align}
with $Z\in N(0,1)$ independent of $W$, which proves \eqref{XY}.
\end{proof}

\subsection{Convergence of moments}\label{Smom}

The results in Theorem~\ref{T1} and \ref{T2} are convergence in
distribution, and as always, this does not by itself imply convergence of
moments.
In this case, as in many others,
it is easy to give supplementary arguments 
showing that the moments converge, as found empirically in 
Section \ref{sec:Explicit-exprs}.

\begin{theorem}\label{Tmom}
  We have convergence of all moments (both ordinary and absolute)
in~\eqref{t1a}, \eqref{t1b}, \eqref{t2a}, and \eqref{t2b}.
\end{theorem}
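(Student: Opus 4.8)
The plan is to deduce convergence of all moments from the convergence in distribution already established in Theorems~\ref{T1} and~\ref{T2} by verifying a uniform integrability condition. The standard tool (see e.g.~\cite[Theorems 5.5.9 and 5.5.10]{Gut}) is: if $Y_n\dto Y$ and $\sup_n\E|Y_n|^{r+\delta}<\infty$ for some $\delta>0$, then $\E[Y_n^k]\to\E[Y^k]$ and $\E|Y_n|^k\to\E|Y|^k$ for every $k\le r$. So it suffices to bound, uniformly in $n$, a moment of order slightly higher than any fixed $r$; in fact a clean way is to bound \emph{all} moments of the relevant scaled variables uniformly in $n$.

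First I would recall the deterministic bounds obtained earlier: by Equation~\eqref{a4}, $2n-\Xn=|S_\Xn|$, and by Equation~\eqref{a3}, $0\le 2n-\Xn\le 2n$, while $\Xn\le 2n$ always. Thus for the centered variable, $|\Xn-\E\Xn|\le |2n-\Xn|+|2n-\E\Xn| = |S_\Xn| + 2\Cn$, and $2\Cn=O(\sqrt n)$ by~\eqref{a4}–\eqref{Ainit}. Hence it is enough to bound the moments of $|S_\Xn|/\sqrt n$ uniformly in $n$. The key point is that $|S_\Xn|\le \max_{0\le N\le 2n}|S_N|$, since $\Xn\le 2n$; and the running maximum of a fair random walk has excellent moment bounds. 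Concretely, by Doob's $L^p$ maximum inequality applied to the martingale $(S_N)$, or by the Burkholder--Davis--Gundy inequalities, $\E\bigl[\max_{N\le 2n}|S_N|^r\bigr]\le C_r (2n)^{r/2}$ for every $r$, with $C_r$ independent of $n$. Therefore $\E\bigl[(|S_\Xn|/\sqrt n)^r\bigr]\le \E\bigl[\max_{N\le 2n}|S_N|^r\bigr]/n^{r/2}\le C_r 2^{r/2}$, a bound uniform in $n$. This gives uniform integrability of every power, hence convergence of all ordinary and absolute moments in~\eqref{t1a} and (after absorbing the $O(\sqrt n)$ shift $2\Cn$, which converges) in~\eqref{t1b}.

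For the ``and'' case, Equation~\eqref{b4} gives $\Yn-2n=|S_\Yn|$, but now $\Yn$ is not bounded by a deterministic multiple of $n$, so I cannot simply invoke $\max_{N\le 2n}$. Instead I would combine the w.h.p.\ bound $\Yn<2n+\nii$ from~\eqref{b6} with a tail estimate: on the complement event (probability $o(1)$, and in fact exponentially small, since $\{\Yn\ge 2n+k\}$ requires $\max(H_{2n+k},T_{2n+k})<n$, forcing $|S_{2n+k}|\ge k$, which has Gaussian-type tails), the contribution to $\E[(|S_\Yn|/\sqrt n)^r]$ is negligible — one bounds $|S_\Yn|$ on $\{\Yn=N\}$ by $\max_{j\le N}|S_j|$ and sums $\sum_{N\ge 2n} \P(\Yn=N)\cdot C_r N^{r/2}/n^{r/2}$ using the exponential tail of $\{\Yn\ge N\}$. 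On the main event $\Yn<2n+\nii$ one again uses $|S_\Yn|\le \max_{N\le 2n+\nii}|S_N|$ and Doob's inequality. This yields $\sup_n \E[(|S_\Yn|/\sqrt n)^r]<\infty$, and convergence of moments in~\eqref{t2a} and~\eqref{t2b} follows exactly as before, using $\E\Yn-2n=2\Cn$ from~\eqref{ql3}.

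The main obstacle I anticipate is the ``and'' case: making rigorous that the rare event $\{\Yn\ge 2n+k\}$ for large $k$ does not spoil the moment bound, i.e.\ quantifying the tail of $\Yn$ well enough that $\sum_k \P(\Yn\ge 2n+k)\,\mathrm{poly}(n+k)$ stays bounded. This is not deep — it comes down to the standard sub-Gaussian bound $\P(|S_{2n+k}|\ge k)\le 2e^{-k^2/(2(2n+k))}$, which for $k\gtrsim \sqrt n\log n$ beats any polynomial — but it requires a little care to state cleanly. Everything else (Doob's inequality, the Billingsley--type ``convergence in distribution plus uniform bound on a higher moment implies moment convergence'' lemma, and absorbing the deterministic $2\Cn$ shift) is routine.
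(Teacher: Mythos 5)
Your argument is sound in outline but follows a genuinely different route from the paper. The paper writes $\Xn=\nu_H(n)\land\nu_T(n)$ and $\Yn=\nu_H(n)\lor\nu_T(n)$, where $\nu_H(n),\nu_T(n)$ are the (negative-binomial) first-passage times to $n$ Heads and to $n$ Tails separately; since $|\Xn-2n|$ and $|\Yn-2n|$ are both dominated by $|\nu_H(n)-2n|+|\nu_T(n)-2n|$, uniform integrability of all powers of $(\nu_H(n)-2n)/\sqrt n$ --- quoted from renewal theory, \cite[Theorem 3.7.4(ii)]{Gut:SRW} --- transfers at once to $\Xn$ and $\Yn$, and moment convergence follows from \cite[Theorem 5.5.9]{Gut}. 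This sidesteps entirely the tail problem for $\Yn$ that occupies most of your effort: the unboundedness of $\Yn$ is absorbed into the marginal distribution of $\nu_H(n)$, for which the uniform integrability is a known result. Your approach (Doob/BDG maximal inequalities for the ``or'' case, plus a sub-Gaussian tail estimate for $\Yn$ in the ``and'' case) is more self-contained and buys explicit quantitative bounds, at the cost of more work.

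Two points in your ``and'' case need repair, though both are easily fixed. First, the event $\{\Yn\ge 2n+k\}$ requires $\min(H_{2n+k-1},T_{2n+k-1})<n$, not $\max$ as you wrote; the conclusion $|S_{2n+k-1}|>k-1$ and the Gaussian tail are then correct. Second, the step where you ``sum $\sum_{N\ge 2n}\P(\Yn=N)\cdot C_rN^{r/2}/n^{r/2}$'' implicitly factors $\E\bigl[\max_{j\le N}|S_j|^r;\,\Yn=N\bigr]$ as $\P(\Yn=N)\,\E\bigl[\max_{j\le N}|S_j|^r\bigr]$, which is not justified since the event and the maximum are dependent; you would need Cauchy--Schwarz there. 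In fact you can avoid the maximal inequality altogether in this case: by Equation~\eqref{b4}, on the event $\{\Yn=2n+k\}$ one has $|S_\Yn|=k$ exactly, so
\begin{align*}
\E\bigl[(|S_\Yn|/\sqrt n)^r\bigr]=\sum_{k\ge0}(k/\sqrt n)^r\,\P(\Yn=2n+k),
\end{align*}
and summation by parts against your tail bound $\P(\Yn\ge 2n+k)\le 2e^{-(k-1)^2/(2(2n+k))}$ gives the uniform bound directly. With these corrections your proof goes through.
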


\begin{proof}
Consider the number of tosses until reaching $n$ Heads, or $n$ Tails,
separately: 
\begin{align}\label{c1}
\nu_H(n)\coloneqq\inf\{k : H_k\geqslant n\},
\qquad
\nu_T(n)\coloneqq\inf\{k : T_k\geqslant n\}. 
\end{align}
Then 
\begin{align}\label{c2h}
  \Xn &= \nu_H(n) \land \nu_T(n),
\\
  \Yn &= \nu_H(n) \lor \nu_T(n).\label{c2t}
\end{align}
Note that $\nu_H(n)$ and $\nu_T(n)$ have the same distribution,
which is negative binomial with the simple probability generation function~\eqref{negbin}  mentioned in Section~\ref{Spreface}.
However, they are dependent, 
so the representation~\eqref{c2h}--\eqref{c2t} does not 
tell us the  distribution of $\Xn$ and $\Yn$,
but it is nevertheless very helpful to obtain useful estimates.

Let $r>0$. It is well-known from renewal theory, 
see e.g.~\cite[Theorem 3.7.4(ii)]{Gut:SRW}, that
the sequence of random variables
\begin{align*}
%\label{c3}
 \left\lvert\frac{\nu_H(n)-2n}{\sqrt n}\right\rvert^r,
\qquad n \geqslant 1,
\end{align*}
is \emph{uniformly integrable}. 
(See e.g.~\cite[Section 5.4]{Gut} for the definition.)
The same is true for $\nu_T(n)$, since it has the same distribution as
$\nu_H(n)$, and then it follows from Equations~\eqref{c2h}--\eqref{c2t} that 
\begin{align}\label{c4}
 \left\lvert\frac{\Xn-2n}{\sqrt n}\right\rvert^r
\qquad \text{and}\qquad
\left\lvert\frac{\Yn-2n}{\sqrt n}\right\rvert^r,
\qquad n \geqslant 1,
\end{align}
also are uniformly integrable.

This implies that all moments converge in~\eqref{t1a} and \eqref{t2a},
see e.g.~\cite[Theorem 5.5.9]{Gut}.
In particular, this shows that $(\E[\Xn]-2n)/\sqrt n$ converges to
$\E[\sqrt2|Z|]$, so $\E[\Xn]=2n+O(\sqrt n)$,
as also was seen in Section~\ref{sec:Explicit-exprs}.
Similarly, or by \eqref{l1l2}, $\E[\Yn]=2n+O(\sqrt n)$. 
This and \eqref{c4} implies that
\begin{align}
%\label{c5}
\left\lvert\frac{\Xn-\E[\Xn]}{\sqrt n}\right\rvert^r
\qquad \text{and}\qquad
\left\lvert\frac{\Yn-\E[\Yn]}{\sqrt n}\right\rvert^r,
\qquad n \geqslant 1,
\end{align}
also are uniformly integrable. 
Hence we have moment convergence in~\eqref{t1b} and \eqref{t2b} too.
\end{proof}

\section{Proof of \eqref{l1ab}--\eqref{l2ab}}\label{Spfl1ab}

We give here a probabilistic proof of \eqref{l1ab}--\eqref{l2ab}.
Note that \eqref{l1l2} yields
\begin{align}
  L_1\Bigl(an,bn;\frac{a}{a+b}\Bigr)
+
  L_2\Bigl(an,bn;\frac{a}{a+b}\Bigr)
=\frac{an}{a/(a+b)}+\frac{bn}{b/(a+b)}
=2(a+b)n,
\end{align}
and thus Equations~\eqref{l1ab} and \eqref{l2ab} are equivalent, so it suffices to consider Equation~\eqref{l1ab}.

We use the standard notation, for a random variable $\cX$ and an event $\cE$, $\E[\cX;\cE]\coloneqq\E[\cX\cdot\boldsymbol1\{\cE\}]$.
We use also the following well-known result.

\begin{lemma}\label{L3}
  Let $\cX\in\mathrm{Bin}(N,p)$ be a binomial random variable. Then, for
  $0\le k< N$,
  \begin{align}
    \label{l3}
\E[Np-\cX;\,\cX\le k]
=\sum_{i=0}^k\binom{N}{i}p^i(1-p)^{N-i}(Np-i)
=\frac{N!}{(N-k-1)!\,k!}p^{k+1}(1-p)^{N-k}.
  \end{align}
\end{lemma}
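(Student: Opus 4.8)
The first equality in \eqref{l3} is immediate from the definition $\E[\cX;\cE]=\E[\cX\cdot\boldsymbol1\{\cE\}]$ and the binomial mass function $\P(\cX=i)=\binom{N}{i}p^i(1-p)^{N-i}$, so the real content is to evaluate the sum $\sum_{i=0}^k\binom{N}{i}p^iq^{N-i}(Np-i)$ (writing $q\coloneqq1-p$) in closed form. The plan is to rewrite each summand as a difference of consecutive terms of a single sequence, so that the sum telescopes.

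Concretely, I would combine the identity $i\binom{N}{i}=N\binom{N-1}{i-1}$ with Pascal's rule $\binom{N}{i}=\binom{N-1}{i}+\binom{N-1}{i-1}$ to get
\[
\binom{N}{i}(Np-i)=Np\binom{N}{i}-N\binom{N-1}{i-1}=Np\binom{N-1}{i}-Nq\binom{N-1}{i-1}.
\]
Multiplying by $p^iq^{N-i}$ then gives
\[
\binom{N}{i}p^iq^{N-i}(Np-i)=N\binom{N-1}{i}p^{i+1}q^{N-i}-N\binom{N-1}{i-1}p^iq^{N-i+1}=a_i-a_{i-1},
\]
where $a_i\coloneqq N\binom{N-1}{i}p^{i+1}q^{N-i}$ and $a_{-1}=0$. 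Summing over $0\le i\le k$ collapses to $a_k=N\binom{N-1}{k}p^{k+1}q^{N-k}$, and for $k<N$ the elementary identity $N\binom{N-1}{k}=N!/\bigl((N-k-1)!\,k!\bigr)$ turns this into the asserted expression.

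An equivalent route is induction on $k$: the base case $k=0$ reads $Npq^N=\tfrac{N!}{(N-1)!\,0!}pq^N$, and the inductive step reduces to checking that the increment of the right-hand side equals the $i=k$ summand on the left, i.e. that $\binom{N}{k}p^kq^{N-k}\bigl((N-k)p-kq\bigr)=\binom{N}{k}p^kq^{N-k}(Np-k)$, which is clear since $(N-k)p-kq=Np-kp-kq=Np-k$. Either way there is essentially no obstacle: the only point requiring care is the bookkeeping in the combinatorial rewriting — tracking the index shift $i\mapsto i-1$ and which powers of $p$ and $q$ attach to which binomial coefficient — after which the whole computation is a one-line telescoping sum.
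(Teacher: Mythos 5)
Your proposal is correct and matches the paper's proof, which simply states that the first equality is the definition of expectation and that the second ``follows by an easy induction on $k$''; your telescoping decomposition $\binom{N}{i}p^iq^{N-i}(Np-i)=a_i-a_{i-1}$ with $a_i=N\binom{N-1}{i}p^{i+1}q^{N-i}$ is exactly the content of that induction, and your explicit check of the inductive increment is the detail the paper leaves to the reader. Nothing is missing; you have merely written out what the paper calls easy.
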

\begin{proof}
  The first equality in \eqref{l3} is just the definition of expectation;
the second follows by an easy induction on $k$.
\end{proof}

\begin{proof}[Proof of Equation~\eqref{l1ab}]
  Fix positive integers $a$ and $b$, and consider a biased coin with
  $\P(\text{Heads})=\frac{a}{a+b}$. 
We reuse the notation in Section~\ref{Shuman} 
(where we studied  the case $a=b=1$),
with some modifications. In particular, now $\Xn\coloneqq X_1(an,bn;\frac{a}{a+b})$.
We now define $\xi_i$ by
\begin{align}
  \xi_i\coloneqq
  \begin{cases}
    b, & \text{if toss $i$ is Heads},
\\
    -a, & \text{if toss $i$ is Tails}.
  \end{cases}  
\end{align}
Then $\E[\xi_i]=0$, so $\xi_i$ still represents a fair game.
As before, let $S_N\coloneqq\sum_{i=1}^N\xi_i$ be the total profit after $N$ coin
tosses. 
Let again $H_N$ and $T_N$ be the number of Heads and Tails in the $N$ first
coin tosses. We have, for every $N\ge0$,
\begin{align}
  N&=H_N+T_N, \label{k1}
\\
S_N&=bH_N-aT_N.\label{k2}
\end{align}

We have either $H_\Xn=an$ and $T_\Xn<bn$ (``{Heads win}'')
or
$H_\Xn<an$ and $T_\Xn=bn$ (``{Tails win}'').
In both cases, Equation~\eqref{k1} yields $\Xn=H_\Xn+T_\Xn<(a+b)n$. Hence, if Heads win, then $H_{(a+b)n}\ge H_\Xn=an$ and thus (using Equation~\eqref{k1} again) $T_{(a+b)n}\le bn$, and consequently, by Equation~\eqref{k2}, $S_{(a+b)n}\ge0$.
Conversely, if Tails win, then $S_{(a+b)n}\le 0$.

Now condition on the event that Heads win, and more precisely $T_\Xn=t$ for
some given $t<bn$. 
Then $\Xn=H_\Xn+T_\Xn=an+t$. 
Continue to toss the coin after $\Xn$; these tosses are independent of what 
happened earlier, and have means $\E\xi_i=0$ as noted above, 
and thus the conditional expectation of
$S_{(a+b)n}-S_\Xn=\sum_{i=an+t+1}^{(a+b)n}\xi_i$ is $0$.
Consequently,
\begin{align}\label{k3}
  \E[S_{(a+b)n};T_\Xn=t] - \E[S_\Xn;T_\Xn=t]
=  \E[S_{(a+b)n}-S_\Xn;T_\Xn=t]
=0.
\end{align}
Summing over all $t<bn$ we obtain, using the comments above, 
%on Heads/Tails win,
\begin{align}\label{k4}
  \E[S_\Xn;\,\text{Heads win}]
=
  \E[S_{(a+b)n};\,\text{Heads win}]
=
  \E[S_{(a+b)n};\,S_{(a+b)n}\ge0].
\end{align}
We analyze the two sides of \eqref{k4} separately.
For the left-hand side, we note that when Heads win, so $H_\Xn=an$, we have
by Equations~\eqref{k2} and \eqref{k1},
\begin{align}\label{k5}
  S_\Xn&=bH_\Xn-aT_\Xn
= (a+b)H_\Xn-a(H_\Xn+T_\Xn) = (a+b)an-a\Xn
\notag\\&
= a[(a+b)n-\Xn].
\end{align}
Hence,
\begin{align}\label{k6}
  \E[S_\Xn;\,\text{Heads win}]
=
 a \E[(a+b)n-\Xn;\,\text{Heads win}].
\end{align}
For the right-hand side, we have, again by Equations~\eqref{k2} and \eqref{k1},
\begin{align}
  \label{k7}
S_{(a+b)n}=bH_{(a+b)n}-aT_{(a+b)n}
=b(H_{(a+b)n}+T_{(a+b)n})-(a+b)T_{(a+b)n}
=(a+b)(bn-T_{(a+b)n}).
\end{align}
Hence, using also Lemma \ref{L3} (with $k=bn$), since 
$T_{(a+b)n}\in\mathrm{Bin}\bigl((a+b)n,\frac{b}{a+b}\bigr)$,
\begin{align}  \label{k8}
\E[S_{(a+b)n};\,S_{(a+b)n}\ge0]
=(a+b)\E[bn-T_{(a+b)n};\,bn-T_{(a+b)n}\ge0]
=\frac{((a+b)n)!}{(an-1)!\,(bn)!}\cdot\frac{b^{bn+1}a^{an}}{(a+b)^{(a+b)n}}.
\end{align}
Combining Equations~\eqref{k6}, \eqref{k4}, and \eqref{k8} yields
\begin{align}  \label{k9}
\E[(a+b)n-\Xn;\,\text{Heads win}]
=a^{-1} \E[S_{(a+b)n};\,S_{(a+b)n}\ge0]
%=(a+b)\E[bn-T_{(a+b)n};\,bn-T_{(a+b)n}\ge0]
=n\frac{((a+b)n)!}{(an)!\,(bn)!}\cdot\frac{b^{bn+1}a^{an}}{(a+b)^{(a+b)n}}.
\end{align}
Interchanging Heads and Tails (and thus $a$ and $b$) in \eqref{k9} yields
\begin{align}  \label{k9t}
\E[(a+b)n-\Xn;\,\text{Tails win}]
=n\frac{((a+b)n)!}{(an)!\,(bn)!}\cdot\frac{a^{an+1}b^{bn}}{(a+b)^{(a+b)n}}.
\end{align}
Finally, summing Equations~\eqref{k9} and \eqref{k9t} yields
\begin{align}  \label{k10}
\E[(a+b)n-\Xn]
=n(a+b)\frac{((a+b)n)!}{(an)!\,(bn)!}\cdot\frac{a^{an}b^{bn}}{(a+b)^{(a+b)n}},
\end{align}
which is Equation~\eqref{l1ab}.
\end{proof}

\section*{Acknowledgements}
S.~Janson was supported by the Knut and Alice Wallenberg Foundation and the Swedish Research Council. L.~Martinez was supported by the NSF Graduate Research Fellowship Program under Grant No. 2233066.

\end{document}